\newtheorem{Theorem}{Theorem}[section]
\newtheorem{Lemma}[Theorem]{Lemma}
\newtheorem{Corollary}[Theorem]{Corollary}
\newcommand{\A}{{\mathcal{A}}}
\newcommand{\C}{{\mathbb C}}
\newcommand{\Ff}{{\mathbb F}}
\newcommand{\N}{\mathbb{N}}
\newcommand{\RR}{\mathbb{R}}
\newcommand{\Z}{\mathbb{Z}}
\newcommand{\mm}{\mathcal{M}}
\newcommand{\s}{\mathcal{H}}
\DeclareMathOperator{\diag}{diag}
\DeclareMathOperator{\rank}{rank}
\DeclareMathOperator{\Syl}{Syl}
\journal{}
\begin{document}

\begin{frontmatter}
\title{Jordan triple product homomorphisms on Hermitian matrices to and from dimension one}

\author{Damjana Kokol Bukov\v sek}
\ead{damjana.kokol.bukovsek@ef.uni-lj.si}
\address{Faculty of Economics, University of Ljubljana, Kardeljeva plo\v s\v cad 17, Ljubljana,
and Institute of Mathematics, Physics and Mechanics, Department of Mathematics, Jadranska~19, Ljubljana, Slovenia.} 

\author{Bla\v z Moj\v skerc}
\ead{blaz.mojskerc@ef.uni-lj.si}
\address{Faculty of Economics, University of Ljubljana, Kardeljeva plo\v s\v cad 17, Ljubljana,
and Institute of Mathematics, Physics and Mechanics, Department of Mathematics, Jadranska~19, Ljubljana, Slovenia.} 

\begin{abstract}
We characterise all Jordan triple product homomorphisms, that is, mappings $\Phi$ satisfying
$$ \Phi(ABA) = \Phi(A)\Phi(B)\Phi(A) $$
from the set of all Hermitian $n \times n$ complex matrices to the field of complex numbers.
Further we characterise all Jordan triple product homomorphisms from the field of complex or real numbers
or the set of all nonnegative real numbers to the set of all Hermitian $n \times n$ complex matrices.
\end{abstract}

\begin{keyword}
Matrix algebra \sep Jordan triple product \sep homomorphism \sep Hermitian matrix
\MSC[2010] 16W10 \sep 16W20 \sep 15B57
\end{keyword}
\end{frontmatter}

\section{Introduction}

There are a lot of papers on multiplicative or antimultiplicative maps on matrix spaces. 
Reader can find many facts and references in a survey paper \cite{Semrl} by \v Semrl.
The structure of (anti)multiplicative mappings on the algebra $\mm_n(\Ff)$ of $n \times n$ matrices over field $\Ff$ is well
understood \cite{JL}, but less is known about (anti)multiplicative mappings from $\mm_n(\Ff)$ to $\mm_m(\Ff)$ for $m>n$.
Both, multiplicative and antimultiplicative maps satisfy the identity
$$ \Phi(ABA) = \Phi(A)\Phi(B)\Phi(A) $$
for all $A, B \in \mm_n(\Ff)$. We can study maps $\Phi$ that satisfy merely this identity, called {\it Jordan triple product 
homomorphisms} (J.T.P. homomorphisms for short). Such mappings were studied under additional assumption of additivity on quite general
domain of certain rings \cite{Bresar}. Additive unital J.T.P. homomorphisms preserve Jordan product, that is, satisfy the identity 
$\Phi(AB + BA) = \Phi(A)\Phi(B) + \Phi(B)\Phi(A)$ for every $A, B \in \mm_n(\Ff)$. If characteristic of $\Ff$ is not 2, additive preservers 
of Jordan product are J.T.P. homomorphisms. 
In \cite{Kuzma} Kuzma characterized nondegenerate J.T.P. homomorphisms on the set $\mm_n(\Ff)$
for $n \ge 3$, in \cite{Dob1} Dobovi\v sek characterized J.T.P. homomorphisms from $\mm_n(\Ff)$ to $\Ff$, and in \cite{Dob2} he
characterized J.T.P. homomorphisms from $\mm_2(\Ff)$ to $\mm_3(\Ff)$.

\medskip

In this paper we focus on J.T.P. homomorphisms on the set of all Hermitian complex $n \times n$ matrices. Let us denote by $A^*$ the
complex conjugate of the transpose of matrix $A$ and by $\s_n(\C)$ the set of all Hermitian complex $n \times n$ matrices
$$ \s_n(\C) = \{ A \in \mm_n(\C); A = A^*\}. $$
We cannot study multiplicative or antimultiplicative maps on Hermitian matrices, since they are not closed under multiplication. 
But they are closed under J.T.P., so studying J.T.P. homomorphisms on Hermitian matrices makes perfect sense. Characterization 
of J.T.P. homomorphisms on the set of Hermitian matrices may shed a new light on the structure of Hermitian matrices and may be useful 
in the areas where only Hermitian or positive (semi)definite matrices appear, such as some areas of financial mathematics.

\medskip

The paper is organized as follows. In section 2 we prove some common properties of J.T.P. homomorphisms on the set $\s_n(\C)$. In section 3 we 
characterize all J.T.P. homomorphisms from the set $\s_n(\C)$ to the field $\C$. The result is an analogue to the paper \cite{Dob1} in the case of Hermitian matrices. In section 4 we 
characterize all J.T.P. homomorphisms from the field $\C$, the field $\RR$, or the set  $\RR^+\cup\{0\}$ to the set $\s_n(\C)$. This result is an analogue to the paper \cite{ORS} in the case of Hermitian matrices. 
In the forthcoming paper \cite{KBM} we characterize all J.T.P. homomorphisms from the set $\s_2(\C)$ to the set $\s_2(\C)$.

\section{Common properties of J.T.P. homomorphisms}

In this section we present some common properties of J.T.P. homomorphisms on the set $\s_n(\C)$. First we introduce some notation.
By $I$ we denote the identity matrix of an appropriate dimension. By $\det A$ we denote the determinant and by $\rank A$ the 
rank of a matrix $A$. By direct sum $A \oplus B$ we denote block diagonal matrix $\left[
\begin{array}{cc}
A & 0 \\
0 & B
\end{array} \right]$.

\medskip

We begin with a simple lemma.

\begin{Lemma} \label{lemma21}
Let $A\in \s_n(\C)$ be a Hermitian matrix. Then there exists a unitary Hermition matrix $B\in \s_n(\C)$ such that
$A=B (\lambda \oplus C) B$
with $\lambda \in \RR$ and $C\in \s_{n-1}(\C)$.
\end{Lemma}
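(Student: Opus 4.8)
The plan is to realize the desired factorization through a single Householder-type reflection, which is automatically both unitary and Hermitian. First I would observe that the hypothesis on $B$ is restrictive: asking $B$ to be simultaneously unitary and Hermitian forces $B^2 = BB^* = I$, so $B$ must be a self-adjoint involution. The matrices $B = I - 2ww^*$ attached to a unit vector $w \in \C^n$ are exactly of this kind, since $B^* = B$ and $(I - 2ww^*)^2 = I - 4ww^* + 4w(w^*w)w^* = I$. The strategy is therefore to pick a real eigenvalue $\lambda$ of $A$, manufacture such a reflection $B$ that carries the first standard basis vector $e_1$ onto a corresponding eigenvector, and then check that conjugating $A$ by $B$ splits off $\lambda$ into the first diagonal slot.

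In detail, because $A = A^*$ it has a real eigenvalue $\lambda \in \RR$ together with a unit eigenvector $v$. Eigenvectors are determined only up to a scalar of modulus one, so I would first multiply $v$ by a suitable phase to arrange that its first coordinate $v_1$ is real; this phase adjustment is the one point where the complex setting genuinely differs from the real one, and I expect it to be the only delicate step. If $v = e_1$, then $A$ already has the block form $\lambda \oplus C$ and one simply takes $B = I$. Otherwise set $w = (e_1 - v)/\|e_1 - v\|$ and $B = I - 2ww^*$. A short computation using $\|e_1 - v\|^2 = 2 - v_1 - \overline{v_1} = 2(1 - v_1)$ — real precisely because $v_1$ is real — shows that the scalar coefficient collapses to $1$ and hence $B e_1 = v$.

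Finally, since $B^2 = I$ we also have $B v = e_1$. The matrix $BAB$ is again Hermitian, and $(BAB)e_1 = B A v = \lambda B v = \lambda e_1$, so $e_1$ is an eigenvector of $BAB$ for the eigenvalue $\lambda$. For a Hermitian matrix, having $e_1$ as an eigenvector forces the off-diagonal entries of its first row and column to vanish, i.e. $BAB = \lambda \oplus C$ for some $C \in \s_{n-1}(\C)$. Conjugating once more by $B$ and invoking $B^2 = I$ yields $A = B(\lambda \oplus C)B$, as claimed. Everything past the phase normalization is formal: the reflection identity $B e_1 = v$ and the conclusion both follow mechanically from the involution property $B^2 = I$, so I anticipate no real obstacle beyond bookkeeping the choice of phase for $v$.
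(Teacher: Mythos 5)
Your proof is correct and follows essentially the same route as the paper: both pick a real eigenvalue with an eigenvector whose first coordinate is made real by a phase adjustment, build the Householder reflection $B = I - 2ww^*$ sending $e_1$ to that eigenvector, and read off the block form of $BAB$ from Hermitian symmetry. The only cosmetic difference is that you make explicit the observations that unitary-plus-Hermitian forces $B^2 = I$ and that the final conjugation step uses this involution property, which the paper leaves implicit.
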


\begin{proof}
Since $A\in \s_n(\C)$, $\sigma(A) \subseteq \RR$. Take $\lambda \in \sigma(A)$. Choose an eigenvector $v$ for $\lambda$ such that its first component $v_1$ is real. Suppose $v=e_1$. Then $B:=I$. Otherwise, define $c=\frac{e_1-v}{\| e_1-v\|}$. Observe that $c^*c=1$. Define $B:=I-2c c^*$. A simple calculation shows that $B^2=I$ and $B^*=B$. Next calculate the image of $e_1$:
\begin{align*}
	Be_1 &= e_1 -2 c c^* e_1 = e_1- \frac{2}{\|e_1-v\|^2} (e_1-v)(e_1-v)^* e_1 \\
	&= e_1 -\frac{2}{\|e_1-v\|^2} (e_1-v)(1-v^*e_1) = e_1-\frac{2(1-v_1)}{\|e_1-v\|^2} (e_1-v)\\
	&= [1,\, 0,\, \ldots, \, 0]^T -\frac{2-2v_1}{2-2v_1} \left[ 1-v_1, \, -v_2, \,\ldots,\, -v_n\right]^T=v,
\end{align*}
since $\|e_1-v\|^2=-2 \langle e_1, v\rangle +1 +\|v\|=2-2v_1$.

Observe that $$BABe_1=BAv=B\lambda v=\lambda Bv=\lambda e_1,$$
hence the first column of $BAB$ equals $[\lambda, \, 0,\ldots,\, 0]^T$. With $BAB$ symmetric we obtain the required form $BAB=\lambda \oplus C$.
\end{proof}

\begin{Corollary} \label{cor}
Let $A\in \s_n(\C)$ be a Hermitian matrix. Then there exist unitary Hermition matrices $B_1, \ldots, B_{n-1} \in \s_n(\C)$  such that
$$A=B_1 B_2 \ldots B_{n-1} D B_{n-1} \ldots B_1$$
with $D$ diagonal.
\end{Corollary}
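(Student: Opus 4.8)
The plan is to prove the statement by induction on $n$, using Lemma~\ref{lemma21} as the engine that strips off one eigenvalue at a time. The base case $n=1$ is immediate: a $1\times 1$ Hermitian matrix is a real scalar, hence already diagonal, and the empty product of $B_i$'s (there are $n-1=0$ factors to supply) leaves it unchanged.

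For the inductive step, I would assume the claim holds on $\s_{n-1}(\C)$ and take $A\in\s_n(\C)$. By Lemma~\ref{lemma21} there is a unitary Hermitian matrix $B_1\in\s_n(\C)$ with $A=B_1(\lambda\oplus C)B_1$, where $\lambda\in\RR$ and $C\in\s_{n-1}(\C)$. The inductive hypothesis then furnishes unitary Hermitian matrices $B_2',\dots,B_{n-1}'\in\s_{n-1}(\C)$ and a diagonal matrix $D'$ such that $C=B_2'\cdots B_{n-1}'\, D'\, B_{n-1}'\cdots B_2'$.

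The key observation is that conjugation of the $(n-1)$-block lifts to conjugation of the full matrix through the direct sum. Set $B_i:=1\oplus B_i'$ for $i=2,\dots,n-1$, where the $1\times 1$ block is the scalar $1$; each such matrix is again unitary and Hermitian in $\s_n(\C)$, since these properties are inherited blockwise. Because direct sums multiply blockwise, $(1\oplus B_2')\cdots(1\oplus B_{n-1}')=1\oplus(B_2'\cdots B_{n-1}')$, so putting $D:=\lambda\oplus D'$ (which is diagonal) gives $\lambda\oplus C=B_2\cdots B_{n-1}\, D\, B_{n-1}\cdots B_2$. Substituting into $A=B_1(\lambda\oplus C)B_1$ yields exactly $A=B_1 B_2\cdots B_{n-1}\, D\, B_{n-1}\cdots B_1$, completing the induction.

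The argument is essentially bookkeeping once Lemma~\ref{lemma21} is in hand; the only point requiring care is checking that the embedding $B_i'\mapsto 1\oplus B_i'$ preserves both unitarity and the Hermitian property, and that the products telescope correctly so that no stray conjugating factor is left over and $D$ genuinely emerges diagonal. I do not anticipate any real obstacle beyond this routine verification.
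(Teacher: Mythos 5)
Your proof is correct and follows exactly the route the paper intends: the paper's own proof is simply the one-line remark that the result follows by induction on $n$ (using Lemma~\ref{lemma21}), and your argument is a careful, complete write-up of precisely that induction, including the blockwise lifting $B_i' \mapsto 1 \oplus B_i'$ that the paper leaves implicit.
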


\begin{proof}
We use induction on $n$ to get the desired result.
\end{proof}

We use this result to prove a very useful observation.
 
\begin{Lemma} \label{sim}
Let $\Phi: \s_n(\C) \to \s_m(\C)$ be a J.T.P. homomorphism with $\Phi(I) = I$, and $A,C\in \s_n(\C)$ similar matrices. Then $\Phi(A)$ and $\Phi(C)$ are unitarily similar.
\end{Lemma}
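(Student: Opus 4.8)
The plan is to exploit the fact that a J.T.P. homomorphism with $\Phi(I)=I$ sends unitary Hermitian involutions to unitaries, and then to bring both $A$ and $C$ to a common diagonal form using the conjugations supplied by Corollary \ref{cor}. As a preliminary observation, let $B\in\s_n(\C)$ be a unitary Hermitian matrix, so that $B=B^*$ and $B^2=I$. Then $BIB=I$, whence $\Phi(B)^2=\Phi(B)\Phi(I)\Phi(B)=\Phi(BIB)=\Phi(I)=I$. Since $\Phi(B)\in\s_m(\C)$ is Hermitian and squares to the identity, it is itself a unitary Hermitian matrix, so $\Phi(B)^{-1}=\Phi(B)=\Phi(B)^*$.

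Next I would show that $\Phi$ preserves unitary similarity along such conjugations. For any $X\in\s_n(\C)$ and any unitary Hermitian $B$, the matrix $BXB$ is again Hermitian, and the J.T.P. identity gives $\Phi(BXB)=\Phi(B)\Phi(X)\Phi(B)^*$, so $\Phi(BXB)$ is unitarily similar to $\Phi(X)$. The key point is that this applies layer by layer to a nested product: in $B_1B_2\cdots B_{n-1}DB_{n-1}\cdots B_2B_1$ every intermediate block $B_k\cdots B_{n-1}DB_{n-1}\cdots B_k$ is Hermitian, so peeling off one involution at a time and iterating the J.T.P. identity yields $\Phi(B_1\cdots B_{n-1}DB_{n-1}\cdots B_1)=V\,\Phi(D)\,V^*$, where $V=\Phi(B_1)\cdots\Phi(B_{n-1})$ is unitary and $V^*=\Phi(B_{n-1})\cdots\Phi(B_1)$.

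Now I would invoke Corollary \ref{cor} to write $A=B_1\cdots B_{n-1}DB_{n-1}\cdots B_1$ and $C=B_1'\cdots B_{n-1}'D'B_{n-1}'\cdots B_1'$ with $D,D'$ real diagonal; the previous step then shows that $\Phi(A)$ is unitarily similar to $\Phi(D)$ and that $\Phi(C)$ is unitarily similar to $\Phi(D')$. Because $A$ and $C$ are similar Hermitian matrices they share the same eigenvalues with multiplicities, so $D$ and $D'$ carry the same diagonal entries up to a permutation, i.e. $D'=PDP^*$ for a permutation matrix $P$. Writing the permutation as a product of transpositions realizes $P$ as a product of transposition matrices, each of which is a unitary Hermitian involution; hence $D'$ is a nested sandwich of $D$ by involutions and the preceding step gives that $\Phi(D')$ is unitarily similar to $\Phi(D)$. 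Chaining these unitary similarities, $\Phi(A)$ is unitarily similar to $\Phi(D)$, which is unitarily similar to $\Phi(D')$, which is unitarily similar to $\Phi(C)$, and since unitary similarity is transitive the conclusion follows.

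The main obstacle is the nested sandwich computation: one must check that at every layer the inner matrix remains Hermitian so that the J.T.P. identity legitimately applies, and keep careful track of the accumulated conjugator $V$ being unitary with $V^*$ implementing the opposite side. Once the involutions-to-unitaries observation is in hand, the permutation argument is routine, since transposition matrices are exactly the Hermitian involutions that reorder the diagonal entries.
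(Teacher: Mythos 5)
Your proof is correct and follows essentially the same route as the paper: decompose both matrices via Corollary \ref{cor}, observe that unitary Hermitian involutions are sent to unitary Hermitian involutions, and peel the nested sandwich layer by layer to exhibit $\Phi(A)$ and $\Phi(C)$ as unitary conjugates of the image of a diagonal matrix. The one difference is that the paper tacitly writes both $A$ and $C$ with the \emph{same} diagonal $D$ (justifiable because the construction in Lemma \ref{lemma21} lets one choose the eigenvalue order), whereas your explicit permutation step $D' = PDP^*$, with $P$ factored into Hermitian transposition involutions, reconciles possibly different orderings --- a detail the paper glosses over, so your argument is if anything slightly more complete.
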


\begin{proof}
By Corollary \ref{cor} there exist unitary Hermitian matrices $B_1, \ldots, B_{n-1}$, $B'_1,\ldots,B'_{n-1}$ and diagonal matrix $D$ such that
$$A=B_1 \ldots B_{n-1}D B_{n-1} \ldots B_1$$
and
$$C=B'_1 \ldots B'_{n-1} D B'_{n-1} \ldots B'_1.$$
Then
\begin{align*}
\Phi(A) &= \Phi(B_1) \ldots \Phi(B_{n-1}) \Phi(D) \Phi(B_{n-1})\ldots \Phi(B_1)
\end{align*}
and similarly, 
\begin{align*}
\Phi(C) &= \Phi(B'_1) \ldots \Phi(B'_{n-1}) \Phi(D) \Phi(B'_{n-1})\ldots \Phi(B'_1),
\end{align*}
where $\Phi(B_i)=\Phi(B_i)^*=\Phi(B_i)^{-1}$ and $\Phi(B'_i)=\Phi(B_i)^*=\Phi(B_i)^{-1}$, which gives us unitary similarity of $\Phi(A)$ and $\Phi(C)$.
\end{proof}

\begin{Lemma} \label{potence}
Let $\Phi:\A \to \mathcal{B}$ be a J.T.P. homomorphism with $\Phi(I)=I$, where $\A,\mathcal{B}\in \{\RR, \RR^+, \C, \s_n(\RR), \s_n(\C) \}$. Then $\Phi(A^n)=\Phi(A)^n$ for every $A\in \A$ and every $n\in\N$, and $\Phi(A^n)=\Phi(A)^n$ for every $A\in \A$ invertible and $n\in\Z$.
\end{Lemma}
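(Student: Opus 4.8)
The plan is to prove the natural-power statement first by induction on $n$ with step two, and then to bootstrap from it to the full integer-power statement by showing that $\Phi$ carries invertible elements to invertible elements. For the base cases, $n=1$ is trivial, while $n=2$ follows by substituting $B=I$ into the defining identity: since $AIA=A^2$ and $\Phi(I)=I$, we obtain
\[ \Phi(A^2)=\Phi(AIA)=\Phi(A)\Phi(I)\Phi(A)=\Phi(A)^2. \]
For the inductive step I would assume the claim for all exponents up to $n-2$ (with $n\ge 3$). Because powers of $A$ commute, $A^n=A\cdot A^{n-2}\cdot A$, so the defining identity applied with $B=A^{n-2}$ gives
\[ \Phi(A^n)=\Phi(A)\Phi(A^{n-2})\Phi(A)=\Phi(A)\,\Phi(A)^{n-2}\,\Phi(A)=\Phi(A)^n, \]
which completes the induction and settles the case $n\in\N$ uniformly over all five domains.

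For the integer case I would first record that $A^{-1}\in\A$ whenever $A\in\A$ is invertible, which holds in each of the listed domains, so that the natural-power result also applies to $A^{-1}$. Writing $P=\Phi(A)$ and $Q=\Phi(A^{-1})$, I would then exploit three products that collapse under the exponent arithmetic. The product $A\,A^{-1}\,A=A$ yields $P=PQP$; the product $A\,A^{-2}\,A=I$, together with $\Phi(A^{-2})=\Phi\big((A^{-1})^2\big)=Q^2$, yields $PQ^2P=I$; and symmetrically $A^{-1}A^2A^{-1}=I$ with $\Phi(A^2)=P^2$ yields $QP^2Q=I$.

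The relation $PQ^2P=I$ exhibits a right inverse $Q^2P$ for $P$, so $P$ is invertible (and likewise $Q$); this is the crux of the whole argument. Once $P$ is known to be invertible, multiplying $P=PQP$ on the left by $P^{-1}$ gives $QP=I$, hence $\Phi(A^{-1})=\Phi(A)^{-1}$. For an arbitrary negative exponent I would then write $A^{-n}=(A^{-1})^n$ and invoke the already-established natural-power identity for the element $A^{-1}\in\A$:
\[ \Phi(A^{-n})=\Phi\big((A^{-1})^n\big)=\Phi(A^{-1})^n=\big(\Phi(A)^{-1}\big)^n=\Phi(A)^{-n}, \]
while $n=0$ is simply $\Phi(I)=I=\Phi(A)^0$.

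The hard part will be the invertibility of $\Phi(A)$ in the matrix domains $\s_n(\RR)$ and $\s_n(\C)$, since there ``invertible'' is genuine matrix invertibility rather than mere non-vanishing; everything else reduces to formal manipulation of the defining identity. In the scalar domains $\RR$, $\RR^+$, and $\C$ the same relations deliver invertibility at once, as it amounts only to $P\neq 0$, so no separate treatment is needed.
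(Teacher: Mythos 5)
Your proposal is correct and follows essentially the same route as the paper: natural powers by induction using a Jordan triple decomposition of $A^n$, invertibility of $\Phi(A)$ extracted from a triple product collapsing to $I$, and then $\Phi(A^{-1})=\Phi(A)^{-1}$ from $\Phi(A)=\Phi(A)\Phi(A^{-1})\Phi(A)$. The only cosmetic differences are your step-two induction $A^n=A\cdot A^{n-2}\cdot A$ in place of the paper's parity split $A^{2k+1}=A^kAA^k$, $A^{2k}=(A^k)^2$, and your explicit treatment of exponents below $-1$, which the paper leaves implicit.
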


\begin{proof}
Take $A\in\A$. Then
$$\Phi(A^2)=\Phi(A I A)= \Phi(A) \Phi(I)\Phi(A)=\Phi(A)^2$$
and $\Phi(A^3)=\Phi(A)^3$.

Take $n>3$. Use induction on $n$ to see that if $n=2k+1$ then $$\Phi(A^n)=\Phi(A^k A A^k)=\Phi(A^k) \Phi(A) \Phi(A^k)=\Phi(A)^k \Phi(A) \Phi(A)^k=\Phi(A)^n,$$
or else $n=2k$ and $$\Phi(A^n)=\Phi(A^{2k})=\Phi(A^k)^2=\Phi(A)^n.$$

For the second part of lemma take $A$ invertible. Then $$I=\Phi(I)=\Phi(A^{-1} A^2 A^{-1})=\Phi(A^{-1}) \Phi(A)^2 \Phi(A^{-1}),$$
hence $\Phi(A)$ invertible. Now take
$$\Phi(A)=\Phi(A A^{-1} A)=\Phi(A) \Phi(A^{-1}) \Phi(A).$$
Since $\Phi(A)$ is invertible, we get $\Phi(A)\Phi(A^{-1})=I$ which gives us $\Phi(A^{-1})=\Phi(A)^{-1}$.
\end{proof}

\section{J.T.P. homomorphisms from $\s_n(\C)$ to $\C$}

In this section we study J.T.P. homomorphisms that map from $n \times n$ Hermitian matrices to the complex field.

\medskip

Let a mapping $\Phi: \s_n(\C) \to \C$ be a J.T.P. homomorphism. Since $\Phi(I)^3=\Phi(I)$ and $\Phi(0)^3=\Phi(0)$, we get the following cases.

\medskip

\noindent
{\sc Case 1:} $\Phi(I)=0$. Using $\Phi(I \cdot A \cdot I) = \Phi(I)\Phi(A)\Phi(I)$ for $A$ arbitrary we see that $\Phi \equiv 0$.

\medskip

\noindent
{\sc Case 2:} $\Phi(I)=-1$. Define a map $\Phi': \s_n(\C) \to \C$ with $\Phi'(A)=-\Phi(A)$ for $A\in \s_n(\C)$. Then $\Phi'(I)=1$ which translates to Cases 3 and 4 for $\Phi'$.

\medskip

Last option is $\Phi(I)=1$. Then by Lemma \ref{potence} we have $\Phi(0)^2=\Phi(0)$, hence we either get $\Phi(0)=0$ or $\Phi(0)=1$.

\medskip

\noindent
{\sc Case 3:} $\Phi(I)=1$ and $\Phi(0)=1$. Then $\Phi \equiv 1$ since $\Phi(0 \cdot A \cdot 0) = \Phi(0)\Phi(A)\Phi(0)$ for $A$ arbitrary.

\medskip

\noindent
{\sc Case 4:} $\Phi(I)=1$ and $\Phi(0)=0$.

\medskip

\noindent Until further notice we assume Case 4.

\begin{Lemma} \label{rank-less-n}
Let $\Phi: \s_n(\C) \to \C$ be a J.T.P. homomorphism with $\Phi(I)=1$ and $\Phi(0)=0$. Then $\Phi(A)=0$ for every $A\in \s_n(\C)$ with $\rank A<n$.
\end{Lemma}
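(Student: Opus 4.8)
The plan is to reduce the statement to orthogonal projections, exploiting that the one-dimensional target turns ``unitarily similar'' into plain equality.

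First I would record a scalar analogue of Lemma~\ref{sim}. Although that lemma is stated for a matrix-valued codomain, its proof applies here: by Corollary~\ref{cor} we may write $A = B_1\cdots B_{n-1}\,D\,B_{n-1}\cdots B_1$ with $D$ the diagonal matrix of eigenvalues of $A$ and each $B_i$ a Hermitian involution. Applying $\Phi$ and using that the images are now scalars (hence commute), together with $B_i^2=I$ and Lemma~\ref{potence}, which force $\Phi(B_i)^2=\Phi(I)=1$, we obtain $\Phi(A)=\Phi(D)$. Reordering the diagonal of $D$ by conjugating with Hermitian transposition matrices (again involutions) does not change the value, so $\Phi(A)$ depends only on the multiset of eigenvalues of $A$. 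In particular $\Phi$ is constant on each unitary-similarity class.

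Next I would analyse orthogonal projections. If $P=P^*=P^2$, then Lemma~\ref{potence} gives $\Phi(P)=\Phi(P^2)=\Phi(P)^2$, so $\Phi(P)\in\{0,1\}$. Since any two rank-$k$ projections are unitarily similar, the previous paragraph shows $\Phi$ takes a single value $p_k\in\{0,1\}$ on all rank-$k$ projections, with $p_0=\Phi(0)=0$ and $p_n=\Phi(I)=1$. Because every rank-deficient Hermitian matrix is unitarily similar to a diagonal $D=\diag(d_1,\dots,d_r,0,\dots,0)$ with $r=\rank A\le n-1$, and the coordinate projection $P=\diag(1,\dots,1,0,\dots,0)$ of rank $r$ satisfies $PDP=D$, one gets $\Phi(A)=\Phi(D)=\Phi(P)^2\Phi(D)=p_r^2\,\Phi(D)$ (the case $r=0$ being $A=0$). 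Thus the whole lemma follows once I show $p_r=0$ for all $1\le r\le n-1$.

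Proving $p_k=0$ for $1\le k\le n-1$ is the heart of the matter, and the obstacle is the range $k>n/2$, where $P$ and a disjointly-supported rank-$k$ projection no longer fit into $\C^n$. The key observation is that for diagonal coordinate projections $P_S$ (ones exactly on the index set $S$) one has $P_S P_{S'} P_S = P_{S\cap S'}$. Taking $S=\{1,\dots,k\}$ and $S'=\{n-k+1,\dots,n\}$, both of size $k$, the intersection has size $\max(0,2k-n)$, and the J.T.P.\ identity yields $p_k=p_k^3=\Phi(P_{S\cap S'})=p_{\max(0,2k-n)}$. For $k\le n/2$ the intersection is empty, so $p_k=p_0=0$; for $k>n/2$ we get $p_k=p_{2k-n}$ with $2k-n<k$. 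A strong induction on $k$ then gives $p_k=0$ for every $1\le k\le n-1$, completing the reduction and hence the proof. The main work is thus the single rank-lowering identity $P_S P_{S'} P_S=P_{S\cap S'}$ together with the descent it powers; everything else is bookkeeping.
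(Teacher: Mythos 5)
Your proof is correct, and at its heart it runs on the same engine as the paper's: the observation that a rank-$k$ object supported on the first $k$ coordinates, when sandwiched with its ``reversal'' supported on the last $k$ coordinates, produces something of rank $\max(0,2k-n)<k$, which powers a descent in rank. The packaging differs, though. The paper applies this trick directly to an arbitrary matrix: it takes $A$ of \emph{minimal} rank $k<n$ among matrices with $\Phi(A)\neq 0$, diagonalizes, sets $B=EAE$ with $E$ the exchange (anti-diagonal) matrix --- a Hermitian involution, so $\Phi(B)=\Phi(E)^2\Phi(A)=\Phi(A)$ --- and then $\rank(ABA)<k$ forces $\Phi(A)^3=\Phi(ABA)=0$, an immediate contradiction. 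You instead first pin down the values on orthogonal projections (showing $p_k\in\{0,1\}$), reduce every rank-deficient Hermitian matrix to a projection via $PDP=D$, and then run the descent $p_k=p_{\max(0,2k-n)}$ as an explicit strong induction on coordinate projections $P_S P_{S'} P_S=P_{S\cap S'}$. Your route costs an extra reduction step that the paper avoids by working with general matrices and a minimal counterexample, but it buys transparency: the recursion $p_k=p_{\max(0,2k-n)}$ makes the descent mechanism completely explicit, and restricting to idempotents means you only ever manipulate $0$'s and $1$'s rather than arbitrary nonzero values of $\Phi$. Both arguments rely on the same preliminaries (Lemma \ref{sim} adapted to scalar codomain, Lemma \ref{potence}), and your re-derivation of the scalar version of Lemma \ref{sim} from Corollary \ref{cor} is a welcome precision, since that lemma is formally stated for a codomain $\s_m(\C)$ rather than $\C$.
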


\begin{proof}
Define $$k=\min_{A\in \s_n} \{ \rank A: \, \Phi(A)\neq 0 \}.$$
Suppose there exists $A\in \s_n(\C)$ with $\rank A=k<n$ such that $\Phi(A) \neq 0$. By Lemma \ref{sim} similar matrices are mapped to the same complex number, so we may assume that matrix $A$ is diagonal, hence $A=\diag(\lambda_1, \ldots,\lambda_k,0,\ldots,0)$ with $\lambda_1,\ldots,\lambda_k \in \RR\setminus \{0\}$.
Define
$$E=\left[
\begin{array}{cccc}
0 & & 1 \\
 &  \iddots & \\
1 &   &0
\end{array} \right].$$
It holds that $E^2=I$, hence $\Phi(E)^2=1$. Further, let
$$B=EAE=\diag(0,\ldots,0, \lambda_k,\ldots,\lambda_1) .$$ 
We have $\Phi(B)=\Phi(A)$, so 
$$\Phi(A^3)=\Phi(A)^3=\Phi(A)\Phi(B) \Phi(A)= \Phi(ABA)=0,$$
since $\rank ABA <k$. Hence $\Phi(A)=0$, which is a contradiction.
\end{proof} 

We can now characterise unital J.T.P. homomorphisms on positive definite matrices.  

\begin{Lemma} \label{pos}
Let $\Phi: \s_n(\C) \to \C$ be a J.T.P. homomorphism with $\Phi(I)=1$ and $\Phi(0)=0$. Then there exists some multiplicative map $\Psi: (0,\infty) \to \C$ such that $\Phi(A)=\Psi(\det A)$ for every positive definite matrix $A$.
\end{Lemma}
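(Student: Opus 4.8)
The plan is to reduce everything to positive definite diagonal matrices and then exploit a componentwise multiplicativity that the J.T.P. identity forces on them. First I would use the argument of Lemma \ref{sim}, noting that it gives genuine \emph{equality} in the scalar-valued setting: every positive definite $A$ is similar to the diagonal matrix $D = \diag(\lambda_1, \ldots, \lambda_n)$ of its eigenvalues $\lambda_i > 0$, and writing $A = B_1 \cdots B_{n-1} D B_{n-1} \cdots B_1$ via the unitary Hermitian involutions of Corollary \ref{cor}, the scalars $\Phi(B_i)$ satisfy $\Phi(B_i)^2 = \Phi(I) = 1$, so they equal $\pm 1$, commute, and cancel in pairs, giving $\Phi(A) = \Phi(D)$. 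Hence $\Phi$ restricted to positive definite matrices is determined by the function $f(\lambda_1, \ldots, \lambda_n) := \Phi(\diag(\lambda_1, \ldots, \lambda_n))$, which is symmetric in the $\lambda_i$ because permutation matrices are unitary Hermitian.

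Next I would establish the key multiplicative relation on diagonal matrices. For positive diagonal $D_1 = \diag(\mu_i)$ and $D_2 = \diag(\nu_i)$, the matrix $D_1^{1/2} = \diag(\sqrt{\mu_i})$ is again positive diagonal and $D_1^{1/2} D_2 D_1^{1/2} = \diag(\mu_i \nu_i)$. Applying the J.T.P. identity and Lemma \ref{potence},
$$\Phi(D_1^{1/2} D_2 D_1^{1/2}) = \Phi(D_1^{1/2})^2 \Phi(D_2) = \Phi((D_1^{1/2})^2)\Phi(D_2) = \Phi(D_1)\Phi(D_2),$$
so that $f(\mu_1\nu_1, \ldots, \mu_n\nu_n) = f(\mu_1,\ldots,\mu_n)\,f(\nu_1,\ldots,\nu_n)$; that is, $f$ is multiplicative with respect to the componentwise product on $(0,\infty)^n$.

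Finally I would collapse $f$ to a function of the determinant. Set $\Psi(\lambda) := f(\lambda, 1, \ldots, 1) = \Phi(\diag(\lambda,1,\ldots,1))$. Taking $(\mu_i) = (\lambda,1,\ldots,1)$ and $(\nu_i) = (\lambda',1,\ldots,1)$ in the relation above shows $\Psi(\lambda\lambda') = \Psi(\lambda)\Psi(\lambda')$, so $\Psi:(0,\infty)\to\C$ is multiplicative. Since $(\lambda_1,\ldots,\lambda_n)$ is the componentwise product of the $n$ vectors $(1,\ldots,\lambda_i,\ldots,1)$, the multiplicativity of $f$ together with the symmetry established above gives
$$f(\lambda_1,\ldots,\lambda_n) = \prod_{i=1}^n \Psi(\lambda_i) = \Psi\!\left(\prod_{i=1}^n \lambda_i\right),$$
the last equality by multiplicativity of $\Psi$. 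As $\prod_i \lambda_i = \det D = \det A$, this yields $\Phi(A) = \Psi(\det A)$ for every positive definite $A$.

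The argument is essentially forced once the right reduction is in place, so there is no single deep obstacle; the points requiring care are (i) checking that the reasoning of Lemma \ref{sim} produces genuine equality, not merely unitary similarity, in the scalar codomain, which is what permits passing to eigenvalues, and (ii) spotting that the symmetric square root $D_1^{1/2}$ is precisely what turns the J.T.P. triple product into an honest componentwise product, after which the factorization into single-coordinate pieces is routine.
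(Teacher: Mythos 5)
Your proof is correct and takes essentially the same route as the paper: reduce to positive diagonal matrices via the scalar (equality) version of Lemma \ref{sim}, define $\Psi(a)=\Phi(\diag(a,1,\ldots,1))$, and exploit the square-root trick $D_1^{1/2}D_2D_1^{1/2}=D_1D_2$ together with Lemma \ref{potence} to extract multiplicativity. The only organizational difference is that you first prove full componentwise multiplicativity of $f$ on $(0,\infty)^n$ and then factor, whereas the paper peels off one eigenvalue at a time; this is pure bookkeeping, and in fact the paper implicitly uses the same symmetry of $\Phi$ on diagonal matrices that you justify explicitly.
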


\begin{proof}
A matrix $A$ is positive definite, hence $\sigma(A) \subseteq (0,\infty)$. Define $$\Psi(a)=\Phi(\diag(a,1,\ldots,1)).$$
Then it holds that
\begin{align*}
	\Phi(A) &= \Phi( \diag(\lambda_1,\ldots,\lambda_n)) \\
		&= \Phi( \diag(\sqrt{\lambda_1},1,\ldots,1)) \Phi( \diag(1,\lambda_2,\ldots, \lambda_n)) \Phi(\diag(\sqrt{\lambda_1},1,\ldots,1)) \\
		&= \Psi(\sqrt{\lambda_1}) \Phi(\diag(1,\lambda_2,\ldots,\lambda_n)) \Psi(\sqrt{\lambda_1}) \\
		&= \Psi(\lambda_1) \Phi(\diag(1,\lambda_2,\ldots,\lambda_n)) \\
		&= \Psi(\lambda_1) \cdots \Psi(\lambda_n) \Phi(I)= \Psi(\lambda_1 \cdots \lambda_n).
\end{align*}
It also holds that
\begin{align*}
	\Psi(ab) &= \Phi(\diag(\sqrt{a},1,\ldots,1)) \Phi(\diag(b,1,\ldots,1)) \Phi(\diag(\sqrt{a},1,\ldots,1)) \\
	&= \Psi(\sqrt{a}) \Psi(b) \Psi(\sqrt{a})= \Psi(a) \Psi(b),
\end{align*}
which concludes the proof.
\end{proof}

Now take some $A\in \s_n(\C)$ invertible. Then $A$ is similar to $\diag(\lambda_1,\ldots,\lambda_n)$ with $\lambda_1,\ldots,\lambda_k>0$ and $\lambda_{k+1},\ldots,\lambda_n<0$, so we may assume that $A$ is diagonal by Lemma \ref{sim}. We can decompose $A$ into the following triple product:
$$A=\diag(\sqrt{|\lambda_1|},\ldots,\sqrt{|\lambda_n|}) \diag(1,\ldots,1,-1,\ldots,-1) \diag(\sqrt{|\lambda_1|},\ldots,\sqrt{|\lambda_n|}).$$
Denote with $\Syl(A)$ the {\it inertia} of a matrix $A$, that is, the number of positive eigenvalues of a matrix $A$.
Using Lemma \ref{pos} we see that
$$\Phi(A)=\Psi(|\det A|) \Phi(\diag(1,\ldots,1,-1,\ldots,-1)).$$
Then there exists some mapping $\eta: \{0,1,\ldots,n\} \to \{-1,1\}$ such that
$$\eta(k)=\Phi(\diag(\underbrace{1,\ldots,1}_k,-1,\ldots,-1)).$$
Hence
\begin{equation} \label{char}
\Phi(A)= \Psi(|\det A|) \eta(\Syl(A)).
\end{equation}

Since we are assuming $\Phi(I)=1$, we get $\eta(n) = 1$. In the case $\Phi(I)=-1$, we get 
$\Phi(A)=-\Phi'(A)=-\Psi(|\det A|) \eta(\Syl(A)) = \Psi(|\det A|) \eta'(\Syl(A))$, where now  $\eta'(n) = -1$.
Because $\Phi(A)=0$ for every $A\in \s_n(\C)$ with $\rank A<n$, equality \eqref{char} holds also for noninvertible matrices.
Cases 1 and 3 can also be written in this form.

\medskip

To see that the converse also holds, we have to prove that every mapping of the form \eqref{char} is also a J.T.P. homomorphism. Take an arbitraty $\Psi:(0,\infty) \to \C$ a multiplicative mapping and $\eta$ some mapping from $\{0,1,\ldots,n\}$ to $\{-1,1\}$.
Then $$\Phi(ABA)=\Psi(|\det(ABA)|) \eta(\Syl(ABA)).$$
On the other hand,
\begin{align*}
\Phi(A) \Phi(B) &\Phi(A)= \\ &= \Psi(|\det A|) \Psi(|\det B|) \Psi(|\det A|) \eta(\Syl(A)) \eta(\Syl(B)) \eta(\Syl(A)) \\
&= \Psi(|\det(ABA)|) \eta(\Syl(B))
\end{align*}
If $\det A=0$, the J.T.P. property obviously applies, so we need to consider only the case when $A$ is invertible. But then the matrices $B$ and $ABA$ are congruent since $A$ is Hermitian, hence we can apply the famous Sylvester's Law of Inertia for Hermitian matrices which implies that matrices $B$ and $ABA$ have the same inertia, i.e., $\Syl(B)=\Syl(ABA)$.

\medskip

We can summarize our findings in the following theorem, which is the main result of this section.

\begin{Theorem} \label{thm3.3}
Let $\Phi$ be a mapping from $\s_n(\C)$ to $\C$. Then $\Phi$ satisfies the identity $\Phi(ABA)=\Phi(A) \Phi(B) \Phi(A)$ 
if and only if $\Phi$ has the form
$$\Phi(A)=\Psi(|\det A|) \eta(\Syl(A)),$$ 
where $\Psi: [0,\infty) \to \C$ is a multiplicative function, $\eta:\{0,1,\ldots,n\} \to \{-1,1\}$ an arbitrary mapping, and $\Syl(A)$ the inertia of $A$.
\end{Theorem}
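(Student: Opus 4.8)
The plan is to prove the equivalence by assembling the case analysis and the three structural lemmas already in hand; most of the work is organisational, since Lemmas \ref{sim}, \ref{rank-less-n}, and \ref{pos} do the heavy lifting. For the necessity direction I would first use the relations $\Phi(I)^3 = \Phi(I)$ and $\Phi(0)^3 = \Phi(0)$ to force $\Phi(I), \Phi(0) \in \{-1,0,1\}$, and then run the four-case reduction. Applying the identity to $\Phi(IAI)$ gives $\Phi(A) = \Phi(I)^2\Phi(A)$, so $\Phi(I)=0$ yields $\Phi\equiv 0$ (realised by $\Psi\equiv 0$); the case $\Phi(I)=-1$ reduces to the unital one after replacing $\Phi$ by $-\Phi$, which only flips the normalisation $\eta(n)$; and in the unital case Lemma \ref{potence} forces $\Phi(0)\in\{0,1\}$, with $\Phi(0)=1$ giving $\Phi\equiv 1$. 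The remaining substantive case is $\Phi(I)=1$, $\Phi(0)=0$.

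In that case Lemma \ref{rank-less-n} disposes of all singular matrices, so I would only need to treat invertible $A$. By Lemma \ref{sim} I may take $A=\diag(\lambda_1,\dots,\lambda_n)$ with $\lambda_i\in\RR\setminus\{0\}$, and factor it as $A = D\,S\,D$ with $D=\diag(\sqrt{|\lambda_1|},\dots,\sqrt{|\lambda_n|})$ positive definite and $S=\diag(\sign\lambda_1,\dots,\sign\lambda_n)$ a sign matrix. The identity gives $\Phi(A)=\Phi(D)\Phi(S)\Phi(D)$; Lemma \ref{pos} supplies a multiplicative $\Psi$ with $\Phi(D)=\Psi(\det D)=\Psi(\sqrt{|\det A|})$, so multiplicativity collapses this to $\Phi(A)=\Psi(|\det A|)\Phi(S)$. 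The key point is that $\Phi(S)$ depends only on $\Syl(A)$: any two $\pm1$ diagonal matrices with the same number of $+1$'s are similar (a permutation of coordinates), so Lemma \ref{sim} makes $\Phi(S)$ equal to its value $\eta(\Syl(A))$ on the canonical representative $\diag(1,\dots,1,-1,\dots,-1)$, and $S^2=I$ forces $\eta$ into $\{-1,1\}$. This is exactly equation \eqref{char}, and extending $\Psi$ to $[0,\infty)$ by $\Psi(0)=0$ makes the formula hold on all of $\s_n(\C)$.

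For sufficiency I would verify directly that any $\Phi(A)=\Psi(|\det A|)\eta(\Syl(A))$ is a J.T.P. homomorphism. Expanding $\Phi(A)\Phi(B)\Phi(A)$, the determinant part multiplies to $\Psi(|\det A|^2|\det B|)=\Psi(|\det(ABA)|)$ while the factor $\eta(\Syl(A))^2=1$ disappears, leaving $\Psi(|\det(ABA)|)\eta(\Syl(B))$. If $A$ is singular both sides vanish; if $A$ is invertible then $ABA=A^\ast B A$ is a congruence of $B$ (because $A=A^\ast$), so Sylvester's Law of Inertia gives $\Syl(ABA)=\Syl(B)$, which matches $\Phi(ABA)$.

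The main obstacle is not any single hard computation but the inertia bookkeeping: one must see clearly that $\Phi(S)$ is governed by $\Syl(A)$ alone rather than by the full spectrum of $A$ — which is why Lemma \ref{sim} must be invoked on the sign matrices rather than on $A$ itself — and, dually, that it is precisely the Hermiticity $A=A^\ast$ that turns the triple product into a congruence and lets Sylvester's law close the converse. Everything else is routine assembly of the preceding lemmas.
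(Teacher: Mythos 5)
Your proposal is correct and follows essentially the same route as the paper: the same four-case reduction on $\Phi(I)$ and $\Phi(0)$, the same use of Lemmas \ref{sim}, \ref{rank-less-n} and \ref{pos} to obtain the factorisation $A=DSD$ and equation \eqref{char}, and the same appeal to Sylvester's Law of Inertia for the converse. The only cosmetic difference is that the paper orders the eigenvalues so that the sign matrix is already $\diag(1,\ldots,1,-1,\ldots,-1)$, whereas you allow an arbitrary sign pattern and then invoke Lemma \ref{sim} (permutation similarity) to reduce it to the canonical representative.
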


\section{J.T.P. homomorphisms from $\C$ to $\s_n(\C)$ }

In this section we study J.T.P. homomorphisms from one-dimensional matrices to $n\times n$
Hermitian matrices. First we look at mappings from non-zero complex numbers $\C^*$, 
non-zero real numbers $\RR^*$, or positive real numbers $\RR^+$ to invertible $n\times n$
Hermitian real or complex matrices.

\begin{Lemma} \label{C_to_invertible}
Let a mapping $\Phi: \A \to \s_n(\C)$ be a J.T.P. homomorphism where $\A$ is the set $\C^*$, $\RR^*$, or $\RR^+$, such that 
$\Phi(\lambda)$ is invertible for every $\lambda \in \A$ and $\Phi(1) = I$. Then there exist a unitary matrix $T$ and 
multiplicative maps $\varphi_1, \varphi_2, ..., \varphi_n : \A \to \RR^*$ with $\varphi_i(1) = 1$, such that
$$\Phi(\lambda) = T \left[
\begin{array}{cccc}
\varphi_1(\lambda) & 0 & \cdots & 0 \\
0 & \varphi_2(\lambda) & \cdots & 0 \\
\vdots & \vdots & \ddots & \vdots \\
0 & 0 & \cdots & \varphi_n(\lambda) 
\end{array} \right] T^*, \ \ \lambda \in \A.$$
\end{Lemma}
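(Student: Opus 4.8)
The plan is to prove that the images $\{\Phi(\lambda) : \lambda \in \A\}$ form a commuting family of invertible Hermitian matrices, to diagonalize them simultaneously by a single unitary $T$, and finally to check that the resulting diagonal entries are multiplicative. Once we produce a unitary $T$ with $T^*\Phi(\lambda)T = \diag(\varphi_1(\lambda),\ldots,\varphi_n(\lambda))$ for all $\lambda$, the entries $\varphi_i(\lambda)$ are automatically real and nonzero, being eigenvalues of an invertible Hermitian matrix, and satisfy $\varphi_i(1)=1$ because $\Phi(1)=I$; the remaining work is then purely to solve a functional equation.

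First I would establish commutativity, which I expect to be the crux of the argument. Since $\A$ is abelian, computing $\Phi(\lambda^2\mu^2)$ in the two ways $\Phi(\lambda \cdot \mu^2 \cdot \lambda)$ and $\Phi(\mu \cdot \lambda^2 \cdot \mu)$ and invoking Lemma~\ref{potence} yields
$$\Phi(\lambda)\Phi(\mu)^2\Phi(\lambda) = \Phi(\mu)\Phi(\lambda)^2\Phi(\mu),$$
which, written with $P = \Phi(\lambda)\Phi(\mu)$ and $P^* = \Phi(\mu)\Phi(\lambda)$, says precisely that $P$ is normal. Normality of $\Phi(\lambda)\Phi(\mu)$ alone does \emph{not} force $\Phi(\lambda)$ and $\Phi(\mu)$ to commute for general Hermitian matrices, so this is the main obstacle. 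The way around it is the observation that the images are in fact positive definite: every element of $\C^*$ and of $\RR^+$ is a square, and $\Phi(\nu^2)=\Phi(\nu)^2$ is positive definite whenever $\Phi(\nu)$ is invertible Hermitian. I would then use the elementary fact that if $A$ is positive definite, $B$ is Hermitian, and $AB$ is normal, then $AB=BA$: indeed $AB$ is similar to the Hermitian matrix $A^{1/2}BA^{1/2}$, hence has real spectrum, and a normal matrix with real spectrum is Hermitian, so $AB=(AB)^*=BA$.

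For $\A=\C^*$ and $\A=\RR^+$ every image is positive definite, so this at once gives a commuting family. For $\A=\RR^*$ only the images of positive arguments are positive definite; here I would write each $\Phi(\lambda)$ with $\lambda<0$ as $\Phi(\sqrt{|\lambda|})\,\Phi(-1)\,\Phi(\sqrt{|\lambda|})$ and apply the auxiliary fact with the positive definite factor $\Phi(\mu)$, $\mu>0$, to conclude that $\Phi(-1)$ commutes with every such $\Phi(\mu)$; since all the factors then pairwise commute, so does the whole family. A commuting family of Hermitian matrices is simultaneously unitarily diagonalizable — the common refinement of their eigenspace decompositions stabilizes after finitely many steps in $\Mn{n}{\C}$ — which produces the single unitary $T$.

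Finally, conjugating the defining identity $\Phi(\lambda^2\mu)=\Phi(\lambda)\Phi(\mu)\Phi(\lambda)$ by $T$ and reading off the $i$-th diagonal entry gives the scalar functional equation $\varphi_i(\lambda^2\mu)=\varphi_i(\lambda)^2\varphi_i(\mu)$ for all $\lambda,\mu\in\A$. Setting $\mu=1$ gives $\varphi_i(\lambda^2)=\varphi_i(\lambda)^2$, so for $\A\in\{\C^*,\RR^+\}$, where every $\alpha$ is a square $\lambda^2$, we obtain $\varphi_i(\alpha\mu)=\varphi_i(\alpha)\varphi_i(\mu)$ immediately. For $\A=\RR^*$ the same argument handles positive arguments, and writing negative arguments as $(-1)$ times a positive one, together with $\varphi_i(-1)^2=\varphi_i(1)=1$, lets one verify multiplicativity in the remaining sign cases, completing the proof.
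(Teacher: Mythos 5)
Your proposal is correct, but it takes a genuinely different route from the paper's. The paper proceeds by induction on $n$: it picks $\lambda_0$ with $\Phi(\lambda_0)$ non-scalar, notes that $\Phi(\lambda_0)$ is positive definite because $\lambda_0$ has a square root in $\A$, diagonalizes it with strictly decreasing eigenvalue blocks, and then uses the identity $AB^2A = BA^2B$ (the very identity you derive) blockwise: comparing diagonal blocks gives $\sum_{j\ne i}(\mu_i^2-\mu_j^2)Y_{ij}Y_{ij}^* = 0$, and positivity of the coefficients attached to the largest eigenvalue forces the off-diagonal blocks to vanish row by row; the diagonal blocks are then J.T.P. homomorphisms into strictly smaller dimensions and induction finishes, with a separate case analysis for $\RR^*$. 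You instead read $AB^2A = BA^2B$ as normality of $\Phi(\lambda)\Phi(\mu)$ and combine it with positive definiteness of the images through the lemma ``positive definite times Hermitian with normal product commutes,'' concluding that the entire image is a commuting family of Hermitian matrices, hence simultaneously unitarily diagonalizable; your treatment of $\RR^*$ via $\Phi(-1)$ and the final functional equation are also sound (the one step you compress is $\varphi_i(-\beta)=\varphi_i(-1)\varphi_i(\beta)$ for $\beta>0$, which follows from $\varphi_i(\lambda^2\mu)=\varphi_i(\lambda)^2\varphi_i(\mu)$ with $\lambda=\sqrt{\beta}$, $\mu=-1$). What your approach buys is the elimination of both the induction on dimension and the block-matrix computation, replaced by one conceptual commutation lemma plus the standard simultaneous-diagonalization theorem, and a uniform treatment of all three domains; what the paper's approach buys is a more hands-on, entry-level argument that never invokes the spectral characterization of normal matrices with real spectrum, and whose block-decomposition technique is reused elsewhere in the paper (e.g.\ in the proof of Theorem \ref{C_to_all}). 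Both proofs ultimately rest on the same two pillars --- the identity forced by commutativity of the domain, and positive definiteness of images of squares --- you simply exploit them globally where the paper exploits them locally.
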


\begin{proof}
First we prove the lemma for the cases $\A = \C^*$ and $\A = \RR^+$. We use the induction on $n$. 
For $n=1$ the statement is obvious, since a J.T.P. homomorphism
on commutative sets is a multiplicative map. So, assume that $n > 1$ and the lemma is true for all positive 
integers smaller than $n$. If for every $\lambda \in \A$ matrix $\Phi(\lambda)$ has only one eigenvalue, then 
every $\Phi(\lambda)$ is a scalar matrix and it has asserted form. So, let $\lambda_0 \in \A$ be such a number
that $\Phi(\lambda_0)$ has at least two distinct eigenvalues. Since $\lambda_0$ has a square root in $\A$, we have 
$\Phi(\lambda_0) = \Phi(\sqrt{\lambda_0})^2$ by Lemma \ref{potence}, hence $\Phi(\lambda_0)$ is positive definite.
 Let $\mu_1 > \mu_2 > ... > \mu_k > 0$ be ordered distinct eigenvalues 
of $\Phi(\lambda_0)$. Then $\Phi(\lambda_0)$ can be written as
$$\Phi(\lambda_0) = T \left[
\begin{array}{cccc}
\mu_1 I & 0 & \cdots & 0 \\
0 & \mu_2 I & \cdots & 0 \\
\vdots & \vdots & \ddots & \vdots \\
0 & 0 & \cdots & \mu_k I
\end{array} \right] T^*.$$ 
Using $\Phi'(\lambda) = T^* \Phi(\lambda) T$ instead of $\Phi(\lambda)$, we may assume that $\Phi(\lambda_0) = A$ is diagonal 
matrix with decreasing diagonal. We are going to prove that $\Phi(\lambda)$ is block diagonal with respect to above decomposition
for every $\lambda \in \A$. So let $\lambda \in \A$ be arbitrary. Then
$$\Phi(\lambda) = B = \left[
\begin{array}{cccc}
X_{1} & Y_{12} & \cdots & Y_{1k} \\
Y_{21} & X_{2} & \cdots & Y_{2k} \\
\vdots & \vdots & \ddots & \vdots \\
Y_{k1} & Y_{k2} & \cdots & X_{k}
\end{array} \right] .$$ 
Since $B$ is Hermitian, we have $X_i$ is Hermitian for every $i$ and $Y_{ij} = Y_{ji}^*$ for every $i \ne j$.
Now we use J.T.P. homomorphism property 
$$ AB^2A = \Phi(\lambda_0)\Phi(\lambda)^2\Phi(\lambda_0) = \Phi(\lambda_0\lambda^2\lambda_0) =$$
$$= \Phi(\lambda\lambda_0^2\lambda) = \Phi(\lambda)\Phi(\lambda_0)^2\Phi(\lambda) = BA^2B .$$
Diagonal blocks of matrix $AB^2A$ are equal to
$$ \mu_i^2(X_i^2 + \sum_{j\ne i}Y_{ij}Y_{ji}) = \mu_i^2X_i^2 + \sum_{j\ne i}\mu_i^2Y_{ij}Y_{ij}^*$$
and diagonal blocks of matrix $BA^2B$ are
$$ \mu_i^2X_i^2 + \sum_{j\ne i}\mu_j^2Y_{ij}Y_{ij}^* .$$
So we get
$$\sum_{j\ne i}(\mu_i^2 - \mu_j^2)Y_{ij}Y_{ij}^* = 0$$
for every $i=1, 2, ... k.$ For $i=1$ this means
$$\sum_{j = 2}^k(\mu_1^2 - \mu_j^2)Y_{1j}Y_{1j}^* = 0.$$
Since $\mu_1$ is the largest eigenvalue of $A$, we have a positive linear combination of positive semidefinite
matrices $Y_{1j}Y_{1j}^*$, which is equal to 0, so $Y_{1j}Y_{1j}^* = 0$ and thus $Y_{1j} = 0$ for every $j = 2, ..., k.$
All off-diagonal blocks of $B$ in the first row and column are zero. Using induction and similar argument we prove
that all off-diagonal blocks of $B$ are zero, so $B$ is block diagonal. We now have that 
$$\Phi(\lambda) = B = \left[
\begin{array}{cccc}
\rho_1(\lambda) & 0 & \cdots & 0 \\
0 & \rho_2(\lambda) & \cdots & 0 \\
\vdots & \vdots & \ddots & \vdots \\
0 & 0 & \cdots & \rho_k(\lambda)
\end{array} \right] .$$ 
Since $\Phi$ is J.T.P. homomorphism, also $\rho_i$ are J.T.P. homomorphisms for every $i$ and they map to matrices of size
smaller than $n$. Hence they have the asserted form and the proof is finished for the cases $\A = \C^*$ and $\A = \RR^+$.

In the case $\A = \RR^*$ we can proceed the same way as in the other two cases, if there exists $\lambda_0 > 0$, 
such that $\Phi(\lambda_0)$  has at least two distinct eigenvalues. If this is not true, we may assume that $\Phi(\lambda)$ has only one eigenvalue for every $\lambda > 0$
and that there exists $\lambda_0 < 0$ such that $\Phi(\lambda_0)$  has at least two distinct eigenvalues. Then there exists
a multiplicative map $\varphi: \RR^+ \to \RR^*$, such that $\Phi(\lambda) = \varphi(\lambda) I$ for every $\lambda > 0$.
Since $\lambda_0^2 > 0$,
$\Phi(\lambda_0)^2$ has only one eigenvalue, and up to unitary similarity $\Phi(\lambda_0)$ has the form
$$\Phi(\lambda_0) = \left[
\begin{array}{cc}
\mu I_k & 0 \\
0 & -\mu I_{n-k}
\end{array} \right] .$$
Now, for any $\lambda < 0$, we have $\lambda/\lambda_0 > 0$, so 
$$\Phi(\lambda) = 
\Phi(\sqrt{\lambda/\lambda_0} \lambda_0 \sqrt{\lambda/\lambda_0}) = 
\Phi(\sqrt{\lambda/\lambda_0}) \Phi(\lambda_0) \Phi(\sqrt{\lambda/\lambda_0}) = $$
$$ = \left[
\begin{array}{cc}
\varphi(\lambda/\lambda_0)\mu I_k & 0 \\
0 & -\varphi(\lambda/\lambda_0)\mu I_{n-k}
\end{array} \right] .$$
Thus, for every $\lambda \in\RR^*$ $\Phi(\lambda)$ has the asserted form and the proof is complete.
\end{proof}

Now we can look at mappings from complex numbers $\C^*$, 
real numbers $\RR^*$, or nonnegative real numbers $\RR^+\cup\{0\}$ to the set of all $n\times n$
Hermitian real or complex matrices. We get the following theorem.

\begin{Theorem} \label{C_to_all}
Let a mapping $\Phi: \A \to \s_n(\C)$ be a J.T.P. homomorphism where $\A$ is the set $\C$, $\RR$, or $\RR^+\cup\{0\}$.
Then there exist a unitary matrix $T$, a diagonal matrix $D$ with $\pm 1$'s on its diagonal and 
multiplicative maps $\varphi_1, \varphi_2, ..., \varphi_n : \A \to \RR$, such that
$$\Phi(\lambda) = T D\left[
\begin{array}{cccc}
\varphi_1(\lambda) & 0 & \cdots & 0 \\
0 & \varphi_2(\lambda) & \cdots & 0 \\
\vdots & \vdots & \ddots & \vdots \\
0 & 0 & \cdots & \varphi_n(\lambda) 
\end{array} \right] T^*, \ \ \lambda \in \A.$$
\end{Theorem}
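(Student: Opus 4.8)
The plan is to reduce to the normalized situation $\Phi(1)=I$ handled by Lemma \ref{C_to_invertible}, and then to fold in the value at $0$ and the sign data by a single simultaneous diagonalization.

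First I would analyse $P:=\Phi(1)$. From $\Phi(1)=\Phi(1\cdot1\cdot1)=\Phi(1)^3$ and $P=P^*$ the eigenvalues of $P$ lie in $\{-1,0,1\}$, so there is a unitary $T_0$ with $T_0^*PT_0=I_a\oplus(-I_b)\oplus 0_c$. The identity $\Phi(\lambda)=\Phi(1\cdot\lambda\cdot1)=P\Phi(\lambda)P$, read blockwise after conjugation by $T_0$, multiplies the $(i,j)$ block of $\Phi(\lambda)$ by the sign product $\varepsilon_i\varepsilon_j$ with $\varepsilon_1=1,\varepsilon_2=-1,\varepsilon_3=0$; hence the $(+,-)$ cross-blocks and every block meeting the $0_c$ part vanish, and $\Phi$ splits as $\Phi_1\oplus\Phi_2\oplus\Phi_3$. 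Here $\Phi_3\equiv0$ because $\Phi_3(1)=0$ forces $\Phi_3(\lambda)=\Phi_3(1)\Phi_3(\lambda)\Phi_3(1)=0$, contributing coordinates with $\varphi_i\equiv0$. On the first block $\Phi_1(1)=I$; on the second $\Phi_2(1)=-I$, and $\widetilde{\Phi}_2:=-\Phi_2$ is again a J.T.P. homomorphism (negation preserves the triple product) with $\widetilde{\Phi}_2(1)=I$. This global $-1$ is exactly the entry the matrix $D$ will carry on the second block.

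Next, writing $\Psi$ for either $\Phi_1$ or $\widetilde{\Phi}_2$, I have $\Psi\colon\A\to\s_m(\C)$ with $\Psi(1)=I$. For $\lambda\neq0$ the scalar $\lambda$ is invertible in $\A$ (here $\A^*\in\{\C^*,\RR^*,\RR^+\}$), so Lemma \ref{potence} makes $\Psi(\lambda)$ invertible, and Lemma \ref{C_to_invertible} applied to $\Psi|_{\A^*}$ gives a common eigenbasis and multiplicative maps on $\A^*$; in particular all $\Psi(\lambda)$ with $\lambda\neq0$ commute. To incorporate $0$, note $\Psi(0)=\Psi(0)^3$ and $\Psi(0)=\Psi(0)\Psi(1)\Psi(0)=\Psi(0)^2$, so $\Psi(0)$ is an orthogonal projection. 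From $\lambda\cdot0\cdot\lambda=0$ we get $\Psi(0)=\Psi(\lambda)\Psi(0)\Psi(\lambda)$, which by invertibility rearranges to $\Psi(0)\Psi(\lambda)=\Psi(\lambda)^{-1}\Psi(0)$; thus $\ker\Psi(0)$ is $\Psi(\lambda)$-invariant, and being Hermitian $\Psi(\lambda)$ also preserves $\mathrm{range}\,\Psi(0)=(\ker\Psi(0))^\perp$, so $\Psi(0)$ commutes with every $\Psi(\lambda)$. Hence $\{\Psi(\lambda):\lambda\in\A\}$ is a commuting family of Hermitian matrices, simultaneously diagonalizable by one unitary $T$: $\Psi(\lambda)=T\diag(\varphi_1(\lambda),\dots,\varphi_m(\lambda))T^*$ with $\varphi_i\colon\A\to\RR$. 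The $\varphi_i$ agree on $\A^*$ with the multiplicative maps of Lemma \ref{C_to_invertible}, while the relation $0\cdot\lambda\cdot0=0$ reads diagonally as $\varphi_i(0)\varphi_i(\lambda)=\varphi_i(0)$, forcing either $\varphi_i(0)=0$ or $\varphi_i\equiv1$; in both cases $\varphi_i$ is multiplicative on all of $\A$.

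Finally I would reassemble: absorbing $T_0$ and the two block diagonalizations into one unitary $T$ and setting $D=I_a\oplus(-I_b)\oplus I_c$ recovers the asserted form, the $-I_b$ undoing the negation $\widetilde{\Phi}_2=-\Phi_2$ and the $0_c$ block using $\varphi_i\equiv0$. I expect the main obstacle to be precisely the treatment of $0$: Lemma \ref{C_to_invertible} diagonalizes $\Phi$ only on $\A^*$ and a priori $\Phi(0)$ need not be diagonal in that basis, so the crux is extracting from the triple-product identities (commutativity via $\lambda\cdot0\cdot\lambda$, and the pinning of $\varphi_i(0)$ via $0\cdot\lambda\cdot0$) that a single unitary $T$ simultaneously diagonalizes the entire family. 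The square-root obstruction for $\A=\RR$ is already absorbed inside Lemma \ref{C_to_invertible}, so it need not be revisited.
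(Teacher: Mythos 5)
Your proposal is correct, and its skeleton coincides with the paper's: both analyze the tripotent $\Phi(1)$, kill the blocks meeting its kernel, use the sign matrix $D$ together with the negation trick $\widetilde{\Phi}_2=-\Phi_2$ to reduce to the unital case $\Phi(1)=I$, and then invoke Lemma \ref{C_to_invertible} on nonzero scalars, where invertibility of $\Phi(\lambda)$ comes from Lemma \ref{potence}. The one genuine divergence is the treatment of $0$, and the order of operations. The paper handles $\Phi(0)$ \emph{before} applying Lemma \ref{C_to_invertible}: since $\Phi(0)$ is a Hermitian idempotent it is unitarily similar to $I_k\oplus 0_{n-k}$, and the identities $\Phi(0)=\Phi(0)\Phi(\lambda)\Phi(0)$ and $\Phi(0)=\Phi(\lambda)\Phi(0)\Phi(\lambda)$ force, by a direct $2\times 2$ block computation, $\Phi(\lambda)=I_k\oplus\rho_3(\lambda)$ with $\rho_3(0)=0$; the lemma is then applied only to $\rho_3$, and the multiplicative maps extend to $0$ trivially. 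You instead apply the lemma first, on $\A\setminus\{0\}$, and then fold $\Phi(0)$ in afterwards by showing it is an orthogonal projection commuting with every $\Phi(\lambda)$ (via invariance of $\ker\Phi(0)$ and its orthocomplement) and appealing to simultaneous diagonalization of a commuting family of Hermitian matrices. Both routes rest on exactly the same two scalar identities $0\cdot\lambda\cdot 0=0$ and $\lambda\cdot 0\cdot\lambda=0$; the paper's order is the more elementary (no simultaneous-diagonalization theorem needed), while yours isolates the clean conceptual point that the entire image is a commuting Hermitian family. One step in your write-up deserves an explicit justification: the claim that the diagonal functions $\varphi_i$ produced by the \emph{new} simultaneous diagonalization still agree on $\A^*$ with the multiplicative maps of Lemma \ref{C_to_invertible}. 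This is true, because any joint eigenvector of the family $\{\Phi(\lambda):\lambda\neq 0\}$ has eigenvalue function equal to one of those multiplicative maps (equivalently, one may diagonalize $\Phi(0)$ inside each joint eigenspace without disturbing the diagonal form on $\A^*$), but as written it is asserted rather than proved, and it is the only obligation your reordering creates that the paper's order avoids.
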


\begin{proof}
First notice that $\Phi(1)$ is a tripotent, so its eigenvalues are equal to 0, 1, or $-1$.
Up to unitary similarity we have
$$\Phi(1) = \left[
\begin{array}{ccc}
I_k & 0 & 0 \\
0 & -I_s & 0 \\
0 & 0 & 0_{n-k-s}
\end{array} \right] .$$
With respect to this decomposition we have for every $\lambda \in \A$
$$\Phi(\lambda) = A = \left[
\begin{array}{ccc}
A_{11} & A_{12} & A_{13} \\
A_{21} & A_{22} & A_{23} \\
A_{31} & A_{32} & A_{33}
\end{array} \right] .$$
Since $A = \Phi(\lambda) = \Phi(1 \cdot \lambda \cdot 1) = \Phi(1)\Phi(\lambda)\Phi(1)$, we obtain that $A_{12} = A_{13} = A_{21} = 
A_{23} = A_{31} = A_{32} = A_{33} = 0$. Let us take
$$D = \left[
\begin{array}{ccc}
I_k & 0 & 0 \\
0 & -I_s & 0 \\
0 & 0 & I_{n-k-s}
\end{array} \right] .$$
We have
$$\Phi(\lambda) = D \left[
\begin{array}{ccc}
\rho_1(\lambda) & 0 & 0 \\
0 & \rho_2(\lambda) & 0 \\
0 & 0 & 0
\end{array} \right] ,$$
where $\rho_1: \A \to \s_k(\C)$, $\rho_2: \A \to \s_s(\C)$ are J.T.P. homomorphisms with $\rho_1(1) = I_k$ and $\rho_2(1) = I_s$.
We take $\varphi_{k+s+1}, ..., \varphi_n : \A \to \RR$ multiplicative maps identically equal to zero.
To prove the theorem, we may add additional assumption that $\Phi(1) = I$.
Now, since $\Phi(0) = \Phi(0 \cdot 1 \cdot 0) = \Phi(0)^2$, we have up to similarity that
$$\Phi(0) = \left[
\begin{array}{cc}
I_k & 0 \\
0 & 0_{n-k}
\end{array} \right] .$$
With respect to this decomposition we have for every $\lambda \in \A$
$$\Phi(\lambda) = A = \left[
\begin{array}{cc}
A_{11} & A_{12}  \\
A_{21} & A_{22}  
\end{array} \right] .$$
Since $\Phi(0) = \Phi(0 \cdot \lambda \cdot 0) = \Phi(0)\Phi(\lambda)\Phi(0)$, we obtain that $A_{11} = I$. 
Further $\Phi(0) = \Phi(\lambda \cdot 0 \cdot \lambda) = \Phi(\lambda)\Phi(0)\Phi(\lambda)$, so $A_{12} = A_{21} = 0$. 
We have
$$\Phi(\lambda) = \left[
\begin{array}{cc}
I_k & 0  \\
0 & \rho_3(\lambda) 
\end{array} \right] .$$
$\rho_3: \A \to \s_{n-k}(\C)$ is a J.T.P. homomorphism with $\rho_3(1) = I_{n-k}$ and $\rho_3(0) = 0$.
We take $\varphi_{1}, ..., \varphi_k : \A \to \RR$ multiplicative maps identically equal to 1.
For any non-zero $\lambda \in \A$ we have $ I = \rho_3(1) = \rho_3(\lambda\lambda^{-2}\lambda) = 
\rho_3(\lambda)\rho_3(\lambda^{-2})\rho_3(\lambda)$, so $\rho_3(\lambda)$ is an invertible matrix.
By Lemma \ref{C_to_invertible} it has diagonal form up to unitary similarity. Obtained multiplicative maps 
$\varphi_i : \A\setminus\{0\} \to \RR$ can be extended to the whole set $\A$, since $\rho_3(0) = 0$.
\end{proof}

\section*{References}

\bigskip

\bigskip

\end{document}